\documentclass[11pt]{article}

\parskip 3mm
\usepackage{amssymb,amsthm,amsmath,amssymb}
\usepackage{enumerate}
\usepackage{color}
\def\red{\color{red}}
\newcommand{\dd}{\mathrm{d}}
\newcommand{\E}{\mathbb{E}}
\newcommand{\1}{\textbf{1}}

\newcommand{\p}[1]{\mathbb{P}\left( #1 \right)}

\newcommand{\e}{\varepsilon}
\def\cD{{\mathcal D}}
\DeclareMathOperator{\Var}{Var}

\usepackage[paper=a4paper, left=1.4in, right=1.4in, top=1.2in, bottom=1.2in]{geometry}
\pagestyle{plain}

\newtheorem{theorem}{Theorem}
\newtheorem{lemma}[theorem]{Lemma}

\theoremstyle{remark}
\newtheorem{remark}[theorem]{Remark}

\theoremstyle{definition}

\newcommand{\brac}[1]{\left(#1\right)}
\newcommand{\bfrac}[2]{\brac{\frac{#1}{#2}}}
\newcommand{\beq}[2]{\begin{equation}\label{#1}#2\end{equation}}
\def\cG{{\mathcal G}}
\newcommand{\set}[1]{\left\{#1\right\}}
\def\e{\varepsilon}
\begin{document}

\title{Random graphs with a fixed maximum degree}
\author{Alan Frieze\thanks{Research supported in part by NSF grant DMS1661063} and  Tomasz Tkocz\\Department of Mathematical Sciences\\Carnegie Mellon University\\Pittsburgh PA15217\\U.S.A.}
\maketitle
\begin{abstract}
We study the component structure of the random graph $G=G_{n,m,d}$. Here $d=O(1)$ and $G$ is sampled uniformly from $\cG_{n,m,d}$, the set of graphs with vertex set $[n]$, $m$ edges and maximum degree at most $d$. If $m=\mu n/2$ then we establish a threshold value $\mu_\star$ such that if $\mu<\mu_\star$ then w.h.p. the maximum component size is $O(\log n)$. If $\mu>\mu_\star$ then w.h.p. there is a unique giant component of order $n$ and the remaining components have size $O( \log n)$.
\end{abstract}

\maketitle

{\footnotesize
\noindent {\em 2010 Mathematics Subject Classification.} 05C80.

\noindent {\em Key words.}  Random Graphs, Maximum Degree.
}
\bigskip

\section{Introduction}
We study the evolution of the component structure of the random graph $G_{n,m,d}$.  Here $d=O(1)$ and $G$ is sampled uniformly from $\cG_{n,m,d}$, the set of graphs with vertex set $[n]$, $m$ edges and maximum degree at most $d$. In the past the first author has studied properties of sparse random graphs with a lower bound on minimum degree, see for example \cite{Hamd3}. In this paper we study sparse random graphs with a bound on the maximum degree. The model we study is close to, but distinct from that studied by Alon, Benjamini and Stacey \cite{ABS} and Nachmias and Peres \cite{NP}. They studied the following model: begin with a random $d$-regular graph and then delete edges with probability $1-p$. They show in \cite{ABS} that for $d\geq 3$ there is a critical probability $p_c=\frac{1}{d-1}$ such that w.h.p. there is a ``double jump'' from components of maximum size $O(\log n)$ for $p<p_c$, a unique giant for $p>p_c$ and a mximum component size of order $n^{2/3}$ for $p=p_c$. The paper \cite{NP} does a detailed analysis of the scaling window around $p=p_c$.

Naively, one might think that this analysis covers $G_{n,m,d}$. We shall see however that $G_{n,m,d}$ and random subgraphs of random regular graphs have distinct degree sequence distributions. In the latter the number of vertices of degree $i=0,1,2,\ldots,d$ will be $n$ times a binomial random variable, whereas in $G_{n,m,d}$ this number will be asymptotic to $n$ times a Poisson random variable, truncated from above.

We will write that $A_n\approx B_n$ if $A_n=(1+o(1))B_n$ and $A_n\lesssim B_n$ if $A_n\leq (1+o(1))B_n$ as $n\to \infty$.

For $d \geq 1$ and $\lambda > 0$ define
\begin{equation}\label{eq:def-fk}
s_d(\lambda) = \sum_{j=0}^d \frac{\lambda^j}{j!}\qquad \text{ and }\qquad f_d(\lambda) = \lambda\frac{s_{d-1}(\lambda)}{s_{d}(\lambda)}.
\end{equation}
\begin{theorem}\label{thm:giants}
Let $d \geq 2$ and $\mu \in (0,d)$. Let $m = \lceil \frac{\mu n}{2}\rceil$. Let $G=G_{n,m,d}$ be a random graph chosen uniformly at random from the graphs with $n$ vertices, $m$ edges and maximum degree at most $d$. Let 
\[
\mu_\star(d) = f_d(f_{d-1}^{-1}(1)),\qquad\text{functional inverse being used here},
\] 
where the functions $f_k$ are defined in \eqref{eq:def-fk} and let $\lambda$ satisfy 
\begin{equation}\label{eq:deflam}
f_d(\lambda) = \mu.
\end{equation}
The following hold w.h.p. 
\begin{enumerate}[(a)]
\item The number $\nu_i,i=0,1,\ldots,d$ of vertices of degree $i$ in $G$ satisfies
\beq{degree}{
\nu_i\approx \lambda_in\text{ where }\lambda_i = \frac{1}{s_d(\lambda)}\frac{\lambda^i}{i!}.
}
\item If $\mu < \mu_\star(d)$, then $G$ has all components of size $O(\log n)$.
\item If $\mu > \mu_\star(d)$, then $G$ has a unique giant component of linear size $\Theta n$, where 
$\Theta$ is defined as follows:   let $D=\sum_{i=1}^Li\lambda_i$ and
\beq{deff}{
g(x)=D-2x-\sum_{i=1}^Li\lambda_i\brac{1-\frac{2x}{D}}^{i/2}.
}
Let $\psi$ be the smallest positive solution to $g(x)=0$. Then
\[
\Theta=1-\sum_{i=1}^L\lambda_i\brac{1-\frac{2\psi}{D}}^{i/2}.
\]
All the other components are of size $O(\log n)$.
\end{enumerate}
\end{theorem}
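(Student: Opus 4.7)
The plan is to condition on the degree sequence of $G=G_{n,m,d}$ and reduce to a Molloy--Reed type result for random graphs with a prescribed degree sequence. Note that conditional on its degree sequence $(d_1,\ldots,d_n)$, the graph $G$ is uniform over simple graphs with that sequence (the max-degree constraint is automatic once each $d_i\leq d$). Using the configuration model, where for uniformly bounded degrees the probability of generating a simple graph stays bounded away from $0$, the number of such simple graphs is, up to a constant, $\frac{(2m)!}{m!\,2^m\,\prod_i d_i!}$.

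For part (a), the probability of the degree multiset $(\nu_0,\ldots,\nu_d)$ is thus proportional to $\binom{n}{\nu_0,\ldots,\nu_d}\prod_i (i!)^{-\nu_i}$, the other factors being $\nu$-independent under the constraints $\sum\nu_i=n$ and $\sum i\nu_i = 2m$. Writing $\nu_i = \lambda_i n$ and applying Stirling, maximizing $-\sum_i \lambda_i\log(\lambda_i\, i!)$ subject to $\sum\lambda_i=1$ and $\sum i\lambda_i=\mu$ via Lagrange multipliers yields $\lambda_i = \lambda^i/(i!\,s_d(\lambda))$ with $\lambda$ determined by \eqref{eq:deflam}. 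A local central limit / Laplace estimate on the lattice then pins the actual degree sequence within $o(n)$ of this optimum w.h.p.

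For (b) and (c), I would apply the Molloy--Reed criterion to the degree sequence produced in (a): a giant emerges iff $\sum_i i(i-2)\lambda_i>0$, which for our $\lambda_i$ equals $\frac{\lambda}{s_d(\lambda)}\brac{\lambda s_{d-2}(\lambda) - s_{d-1}(\lambda)}$ and is thus positive iff $f_{d-1}(\lambda)>1$, i.e., $\mu > f_d(f_{d-1}^{-1}(1)) = \mu_\star(d)$, using monotonicity of $f_d$. In the subcritical case, dominating the component exploration by a subcritical Galton--Watson process with offspring distribution bounded by $d-1$ yields exponential tails on component sizes and the $O(\log n)$ bound via a union bound. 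In the supercritical case the Molloy--Reed formula gives $\Theta n = (1 - \sum_i \lambda_i q^i)n$, where $q$ is the extinction probability of a half-edge exploration; identifying $q = (1 - 2\psi/D)^{1/2}$ via the fixed-point equation $g(x)=0$ in \eqref{deff} recovers the stated $\Theta$. Uniqueness of the giant and the $O(\log n)$ bound on the remaining components follow from standard sprinkling and second-moment arguments, again exploiting the uniform degree bound for exponential tails.

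I expect the main obstacle to be the reduction step: one must enumerate $\cG_{n,m,d}$ and its fibres over each degree sequence accurately enough via the configuration model (keeping track of the sub-exponential simple-graph correction), and then deploy Molloy--Reed in a form applicable to a random but concentrated degree sequence rather than a deterministic one. A subtler point is that the concentration obtained in (a) must be sharp enough to separate the sub- and supercritical regimes for $\mu$ close to $\mu_\star(d)$ without falling into a scaling window, which requires the second-order Stirling correction (hence the local CLT) rather than just leading-order entropy maximization.
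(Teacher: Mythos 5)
Your proposal follows the same overall architecture as the paper: reduce to a uniform graph with a prescribed degree sequence via the configuration model (using that the simplicity probability is bounded below when degrees are bounded), identify the limiting degree distribution as the truncated Poisson $\lambda_i=\lambda^i/(i!\,s_d(\lambda))$ with $f_d(\lambda)=\mu$, and then invoke the Molloy--Reed criterion with $Q=\sum_i i(i-2)\lambda_i=f_d(\lambda)(f_{d-1}(\lambda)-1)$. Where you differ is in the derivation of part (a): you maximize the entropy functional $-\sum_i\lambda_i\log(\lambda_i\, i!)$ by Lagrange multipliers and then argue concentration by a Laplace-type estimate, whereas the paper observes that the degree sequence is exactly an i.i.d.\ truncated Poisson vector conditioned on its sum (a balls-in-boxes identity, Lemma \ref{lm:trun-poiss}), removes the conditioning at the cost of a factor $O(\sqrt{n})$ via a local CLT (Lemma \ref{lm:local-CLT}, which in turn uses the log-concavity of $d\mapsto s_d(\lambda)$ to control the variance), and finishes with Hoeffding plus a union bound over the exceptional set $\mathcal{A}$. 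The two routes are morally equivalent; yours trades the probabilistic identity for an explicit variational computation. One small inaccuracy: the simple-graph correction $e^{-\alpha_x(\alpha_x+1)}$ is \emph{not} $\nu$-independent, only uniformly bounded between $e^{-d(d+1)}$ and $1$; this is harmless because the entropy maximum is nondegenerate (so an $\varepsilon n$ deviation costs $e^{-c(\varepsilon)n}$), but the argument should be phrased as "up to bounded multiplicative error" rather than exact proportionality.

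The one genuine gap is that you use, without proof, that $f_{d}$ and $f_{d-1}$ are strictly increasing bijections onto $(0,d)$ and $(0,d-1)$. This is needed both for $\lambda$ and $\mu_\star(d)=f_d(f_{d-1}^{-1}(1))$ to be well defined and for the chain $Q>0\iff f_{d-1}(\lambda)>1\iff \lambda>f_{d-1}^{-1}(1)\iff \mu>\mu_\star(d)$. It is not a triviality: writing $\frac{\dd}{\dd\lambda}\log f_k=\frac1\lambda+\frac{s_{k-2}(\lambda)}{s_{k-1}(\lambda)}-\frac{s_{k-1}(\lambda)}{s_k(\lambda)}$, one finds that as $\lambda\to\infty$ the three terms tend to $\frac{1}{\lambda}$, $\frac{k-1}{\lambda}$ and $\frac{k}{\lambda}$ respectively, so the positivity you need is asymptotically tight and does not follow from crude bounds; the paper's Lemma \ref{lm:fk-monot} proves it by rewriting $s_k$ via the upper incomplete gamma function and carrying out a careful monotonicity analysis. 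Supplying this argument (or an alternative) is required to complete your proof. Finally, your closing concern about needing second-order precision near $\mu_\star(d)$ is unnecessary for the theorem as stated: $\mu$ is a fixed constant, so $Q$ is a fixed nonzero constant and concentration of the degree counts to within $o(n)$ already places you outside any scaling window.
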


\begin{remark}\label{rem:mu*-explicit}
Numerical values of the threshold point $\mu_\star(d)$ for the average degree for small values of $d$ are gathered in Table \ref{tab:mu*}. Note that we have an exact expression for the case $d=3$. We use $f_2(\lambda)=\frac{\lambda(1+\lambda)}{1+\lambda+\lambda^2/2}$ to see that $f_2^{-1}(1)=\sqrt{2}$. And then $\mu_{\star} (3) =\frac{\lambda(1+\lambda+\lambda^2/2)}{1+\lambda+\lambda^2/2+\lambda^3/6}=3(\sqrt{2}-1)$.

Moreover, if we consider large $d$, then we have, as a function of $d$,
\begin{equation}\label{eq:mu*-large-d}
\mu_\star(d) = 1 + \frac{1}{e(d-1)!}-\frac{1}{ed!}+O\bfrac{1}{(d-1)!^2}.
\end{equation}
Comparing to the percolation model considered in \cite{ABS} and \cite{NP}, where $\mu_\star(d) = 1 + \frac{1}{d-1}$, we see that in our model a giant occurs \emph{ significantly earlier} for large $d$.
Approximation \eqref{eq:mu*-large-d} can be justified as follows. We have
\[
f_d(1) = \frac{s_{d-1}(1)}{s_d(1)} = 1 - \frac{1}{d!s_{d}(1)} = 1 - \frac{1}{e d!}+O\bfrac{1}{d!^2}
\] 
and 
\[
f_d'(1) = \frac{(s_{d-1}(1) + s_{d-2}(1))s_d(1)-s_{d-1}(1)^2}{s_d(1)^2}=1-\frac{1}{e d!}+O\bfrac{1}{d!^2},
\] 
(Express here $s_{d-1}$ and $s_{d-2}$ in terms of $s_d$ and use $s_d(1) =e - O(1/d!)$). 

If $f_{d-1}^{-1}(1) = 1 + \varepsilon$, then
\[
  1 = f_{d-1}(1+\varepsilon)= f_{d-1}(1) + f_{d-1}'(1)\varepsilon+O(\e^2),
\]
which gives 
\[
\varepsilon +O(\e^2)= \frac{1 - f_{d-1}(1)}{f_{d-1}'(1)}=\frac{1}{e(d-1)!} + O\bfrac{1}{(d-1)!d!}.
\] 
Consequently, 
\[
\mu_\star(d) = f_d(1 + \varepsilon)=f_d(1) + f_d'(1)\frac{1 - f_{d-1}(1)}{f_{d-1}'(1)}+O(\e^2). 
\]
and \eqref{eq:mu*-large-d} follows.
\end{remark}

\begin{table}[h!]\label{tab:mu*}
\begin{center}
\begin{tabular}{|c|c|}\hline
$d$ & $\mu_\star(d)$ \\\hline
$2$ & $\infty$ \\
$3$ & $3(\sqrt{2}-1)= 1.23264\ldots$ \\
$4$ & $1.05783$ \\
$5$ & $1.01309$ \\
$6$ & $1.00259$ \\
$7$ & $1.00044$ \\
$8$ & $1.00006$ \\\hline
\end{tabular}
\end{center}\caption{Numerical values of $\mu_\star(d)$ for small $d$.}
\end{table}

\section{Proof of Theorem \ref{thm:giants}}

The main idea is to estimate the degree distribution of $G_{n,m,d}$ and then apply the results of Molloy and Reed \cite{mol1}, \cite{mol2}.

\subsection{Technical Lemmas}

The following lemmas will be needed for the proof of part (a).
\begin{lemma}\label{lm:trun-poiss}
Let $\lambda > 0$, $d \geq 1$. Let $Z_1, Z_2, \ldots$ be i.i.d. random variables with 
\begin{equation}\label{eq:def-trun-poiss}
\p{Z_i = k} = c_\lambda\frac{\lambda^k}{k!}, \qquad k = 0,1,\ldots,d,
\end{equation}
where 
\begin{equation}\label{eq:def-sum}
c_\lambda=\frac{1}{s_d(\lambda)}.
\end{equation}  
(a truncated Poisson distribution). Let $(x_1,\ldots, x_n)$ be a random vector of occupancies of boxes when $m$ distinguishable balls are placed uniformly at random into $n$ labelled boxes, each with capacity $d$. Then the vector $(Z_1,\ldots,Z_n)$ conditioned on $\sum_{j=1}^n Z_j = m$ has the same distribution as $(x_1,\ldots, x_n)$.
\end{lemma}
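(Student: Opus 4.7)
The plan is to prove both distributions are equal by writing each one out explicitly as a probability mass function on the same support, and observing that they have identical functional form (and hence, being probability measures on the same set, must coincide).

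First I would fix a non-negative integer vector $(k_1,\ldots,k_n)$ satisfying $k_j \leq d$ for all $j$ and $\sum_{j=1}^n k_j = m$; this is precisely the common support of the two distributions. For the balls-in-boxes side, ``$m$ distinguishable balls placed uniformly at random into $n$ labelled boxes of capacity $d$'' means a uniform random element of the set $S \subseteq [n]^m$ of placement-sequences $(b_1,\ldots,b_m)$ such that no label appears more than $d$ times. The number of sequences yielding occupancy vector $(k_1,\ldots,k_n)$ is the multinomial coefficient $\binom{m}{k_1,\ldots,k_n} = \frac{m!}{k_1!\cdots k_n!}$, so
\[
\p{x_1=k_1,\ldots,x_n=k_n} = \frac{1}{|S|}\cdot\frac{m!}{k_1!\cdots k_n!}.
\]

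Next I would compute the conditional probability for $(Z_1,\ldots,Z_n)$. By independence, for any valid vector,
\[
\p{Z_1=k_1,\ldots,Z_n=k_n} = \prod_{j=1}^n c_\lambda \frac{\lambda^{k_j}}{k_j!} = \frac{c_\lambda^n \lambda^m}{k_1!\cdots k_n!},
\]
using $\sum k_j = m$. Dividing by $\p{\sum_j Z_j = m}$ yields
\[
\p{Z_1=k_1,\ldots,Z_n=k_n \,\Big|\, \sum_j Z_j = m} = \frac{c_\lambda^n \lambda^m}{k_1!\cdots k_n!\cdot \p{\sum_j Z_j = m}}.
\]

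Finally I would observe that both expressions are, as functions of $(k_1,\ldots,k_n)$ on the common support, constant multiples of $1/(k_1!\cdots k_n!)$. Since both are probability distributions on the same finite set, the proportionality constants must match and the two laws are identical. (As a sanity check, this forces the identity $|S|\cdot \p{\sum_j Z_j = m} = m!\, c_\lambda^n\, \lambda^m$, which one can verify directly by summing the multinomial expansion of $s_d(\lambda)^n$ restricted to the level set $\sum k_j = m$.) There is really no obstacle here — the argument is a one-line factorisation, and the lemma is just the standard ``conditioning i.i.d.\ variables on their sum gives a natural combinatorial distribution'' fact, in the specific guise of the Poisson/multinomial correspondence applied to a truncated Poisson.
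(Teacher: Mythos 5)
Your proof is correct and follows essentially the same route as the paper: both compute the two probability mass functions explicitly, observe that each is proportional to $1/(k_1!\cdots k_n!)$ on the common support, and conclude they coincide. The only cosmetic difference is that the paper writes out the normalising sums over the support explicitly rather than invoking the ``proportional probability measures on the same set are equal'' shortcut.
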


\begin{proof}
Let $A$ be the set of vectors $z = (z_1,\ldots,z_n)$ of non-negative integers $z_j$ such that $\sum_{j=1}^n z_j = m$ and $z_j \leq d$ for every $j$. Fix $z \in A$. We have
\begin{align*}
\p{(Z_1,\ldots,Z_n) = z \ \Big| \ \sum_{j=1}^n Z_j = m} &= \frac{\p{(Z_1,\ldots,Z_n) = z}}{\p{\sum_{j=1}^n Z_j = m}} \\
&=
 \frac{\prod_{j=1}^n c_\lambda\frac{\lambda^{z_j}}{z_j!}}{\sum_{z \in A} \prod_{j=1}^n c_\lambda\frac{\lambda^{z_j}}{z_j!}} 
= \frac{\frac{1}{z_1!\cdot\ldots\cdot z_n!}}{\sum_{z \in A}\frac{1}{z_1!\cdot\ldots\cdot z_n!}}.
\end{align*}
On the other hand, there are $\frac{m!}{z_1!\cdot\ldots\cdot z_n!}$ ways to place $m$ balls into $n$ labelled boxes in such a way that the $j$th box gets $z_j$ balls. Therefore,
\[
\p{(x_1,\ldots,x_n) = z} = \frac{\frac{m!}{z_1!\cdot\ldots\cdot z_n!}}{\sum_{z \in A}\frac{m!}{z_1!\cdot\ldots\cdot z_n!}} = \p{(Z_1,\ldots,Z_n) = z \ \Big| \ \sum_{j=1}^n Z_j = m}.
\]
\end{proof}

\begin{remark}\label{rem:trun-poiss-gen}
The same argument can be adapted to different constraints for the occupancies of the boxes. In general, we can replace $k\in\{0,1,\ldots,d\}$ by $k\in I$ for some set of non-negative integers $I$. For example, instead of restricting the maximal occupancy, we can require a minimal occupancy (which has appeared in Lemma 4 in \cite{AFP}), or that the occupancy is even, etc. 
\end{remark}

A straightforward consequence of a standard i.i.d. case of the local central limit theorem (see, e.g. Theorem 3.5.2 in \cite{Dur}) is the following lemma which will help us get rid of the conditioning from Lemma \ref{lm:trun-poiss}.

\begin{lemma}\label{lm:local-CLT}
Let $\lambda > 0$, $d \geq 1$. Let $Z_1, Z_2, \ldots$ be i.i.d. truncated Poisson random variables defined by \eqref{eq:def-trun-poiss} and \eqref{eq:def-sum}. Then
\begin{equation}\label{eq:local-CLT}
\sup_{m = 0,1,2,\ldots}\sqrt{n}\left|\p{Z_1+\ldots+Z_n = m} - \frac{1}{\sqrt{2\pi n \sigma^2}}\exp\left\{-\frac{(m-\mu n)^2}{2n\sigma^2}\right\}\right| \xrightarrow[n\to\infty]{} 0,
\end{equation}
where $\mu = \E Z_1$ and $\sigma^2 = \Var(Z_1)$.
\end{lemma}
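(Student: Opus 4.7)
The plan is to reduce the claim directly to the standard local central limit theorem for sums of i.i.d.\ integer-valued random variables, which is the content of Theorem 3.5.2 in \cite{Dur}. The only task is to check that the truncated Poisson law defined by \eqref{eq:def-trun-poiss}--\eqref{eq:def-sum} satisfies the hypotheses of that theorem.

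First, I would note that $Z_1$ is bounded, with support contained in $\{0,1,\ldots,d\}$, so all moments exist and in particular $\mu = \E Z_1$ and $\sigma^2 = \Var(Z_1)$ are finite. Moreover $\sigma^2 > 0$: since $d \geq 1$, both $\p{Z_1 = 0} = c_\lambda$ and $\p{Z_1 = 1} = c_\lambda \lambda$ are strictly positive, so $Z_1$ is non-degenerate.

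Second, I would verify the aperiodicity assumption, which in Durrett's formulation asks that the distribution of $Z_1$ not be supported on any proper coset $a\mathbb{Z}+b$ with integer $a \geq 2$. This is immediate from the previous observation: any sublattice of $\mathbb{Z}$ containing both $0$ and $1$ must equal $\mathbb{Z}$ itself, so the span of the distribution of $Z_1$ is $1$.

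With these two checks in place, Theorem 3.5.2 of \cite{Dur} applies verbatim and delivers exactly the uniform estimate \eqref{eq:local-CLT}. There is no substantive obstacle; the whole proof amounts to quoting the local CLT once its hypotheses are verified, and the aperiodicity step is trivial because consecutive integers $0$ and $1$ always belong to the support.
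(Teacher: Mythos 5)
Your proposal is correct and matches the paper's approach exactly: the paper likewise derives the lemma as an immediate consequence of Theorem 3.5.2 in \cite{Dur}, offering no further argument. Your explicit verification of the hypotheses (non-degeneracy and span $1$, via the strictly positive masses at $0$ and $1$) is a welcome, if routine, elaboration of what the paper leaves implicit.
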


We shall also need two lemmas concerning the function $s_d$ from \eqref{eq:def-fk}. A function $f$ is log-concave if $\log f$ is concave.
\begin{lemma}\label{lm:log-conc-sd}
For every $\lambda > 0$, the sequence $(s_d(\lambda))_{d=0}^\infty$ defined by \eqref{eq:def-fk} is log-concave, that is $s_{d-1}(\lambda)s_{d+1}(\lambda) \leq s_d(\lambda)^2$, $d \geq 1$.
\end{lemma}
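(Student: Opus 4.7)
The plan is to reduce log-concavity to a one-line algebraic identity followed by a routine geometric-tail estimate.

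First I would use the recursions $s_{d-1}(\lambda) = s_d(\lambda) - \frac{\lambda^d}{d!}$ and $s_{d+1}(\lambda) = s_d(\lambda) + \frac{\lambda^{d+1}}{(d+1)!}$ to expand the difference, which collapses to the clean identity
\[
s_d(\lambda)^2 - s_{d-1}(\lambda)\,s_{d+1}(\lambda) \;=\; \frac{\lambda^d}{d!}\,s_d(\lambda) \;-\; \frac{\lambda^{d+1}}{(d+1)!}\,s_{d-1}(\lambda).
\]
Substituting $s_d = s_{d-1} + \lambda^d/d!$ on the right and rearranging, the asserted inequality $s_{d-1}s_{d+1}\leq s_d^2$ becomes equivalent to
\[
(d+1)\,\frac{\lambda^d}{d!} \;\geq\; s_{d-1}(\lambda)\,(\lambda - d - 1).
\]

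When $\lambda \leq d+1$ the right-hand side is non-positive, so the inequality holds trivially. For $\lambda > d+1$ I would bound $s_{d-1}$ from above by a geometric series: starting from the largest term $\lambda^{d-1}/(d-1)!$ and going backwards, each consecutive ratio is $\frac{\lambda^{j-1}/(j-1)!}{\lambda^{j}/j!} = j/\lambda \leq d/\lambda < 1$, so
\[
s_{d-1}(\lambda) \;\leq\; \frac{\lambda^{d-1}}{(d-1)!}\sum_{k=0}^{\infty}\Bigl(\frac{d}{\lambda}\Bigr)^k \;=\; \frac{\lambda^d}{(d-1)!\,(\lambda - d)}.
\]
Plugging this in and cancelling the common factor $\lambda^d$, the required inequality reduces to $(d+1)(\lambda - d) \geq d(\lambda - d - 1)$, i.e.\ $\lambda \geq 0$, which is immediate.

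The only step requiring care is the initial rewriting: expanding $s_d^2 - s_{d-1}s_{d+1}$ naively yields a messier three-term expression, whereas grouping as $s_d\,\frac{\lambda^d}{d!} - s_{d-1}\,\frac{\lambda^{d+1}}{(d+1)!}$ exposes the underlying ratio test $s_d/s_{d-1} \geq \lambda/(d+1)$ and keeps the algebra transparent. Everything else — splitting at $\lambda = d+1$ and the geometric-series bound — is entirely mechanical.
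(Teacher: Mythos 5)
Your proof is correct. The opening identity $s_d^2-s_{d-1}s_{d+1}=\frac{\lambda^d}{d!}s_d-\frac{\lambda^{d+1}}{(d+1)!}s_{d-1}$ checks out (it is just $a_d s_d - a_{d+1}s_{d-1}$ with $a_j=\lambda^j/j!$, i.e.\ the ratio test $s_d/s_{d-1}\ge \lambda/(d+1)$), the reduction to $(d+1)\frac{\lambda^d}{d!}\ge s_{d-1}(\lambda)(\lambda-d-1)$ is a correct rearrangement, the case $\lambda\le d+1$ is indeed trivial, and for $\lambda>d+1$ the geometric bound $s_{d-1}(\lambda)\le \frac{\lambda^d}{(d-1)!(\lambda-d)}$ is valid (each backward ratio is $j/\lambda\le d/\lambda<1$), after which the inequality collapses to $\lambda\ge 0$ as you say.

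Your route is genuinely different from the paper's. The paper proves the lemma by first writing $e^{-\lambda}s_d(\lambda)=\int_\lambda^\infty \frac{t^d}{d!}e^{-t}\,\dd t$ (an incomplete gamma function) and then invoking a general theorem going back to Borell: if $f$ is log-concave on $(0,\infty)$, then $p\mapsto \int_0^\infty \frac{t^p}{\Gamma(p+1)}f(t)\,\dd t$ is log-concave in $p$; applying this to $f(t)=e^{-t}\1_{(\lambda,\infty)}(t)$ gives the claim. That argument is shorter on the page but outsources the real work to a cited result, and it has the side benefit of setting up the incomplete-gamma representation that the paper reuses in the proof of Lemma \ref{lm:fk-monot}; it also gives log-concavity in a continuous parameter $p$, which is more than is needed here. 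Your argument is entirely elementary and self-contained --- two lines of algebra, a trivial case, and a geometric series --- at the cost of being specific to the integer-indexed sequence. Either proof is acceptable; yours would make the paper independent of the references \cite{Bor}, \cite{Oliv}, \cite{NO} for this particular lemma.
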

\begin{proof}
First note that the product of log-concave functions is log-concave. Integration by parts yields
\begin{equation}\label{eq:s_d-integral}
e^{-\lambda}s_d(\lambda) = \int_\lambda^\infty \frac{t^d}{d!}e^{-t}\dd t.
\end{equation}
Given this integral representation, the log-concavity of $(s_d(\lambda))_{d=0}^\infty$ follows from a more general result saying that if $f: (0,\infty)\to [0,\infty)$ is log-concave, then the function $(0,+\infty) \ni p \mapsto \int_0^\infty \frac{t^p}{\Gamma(p+1)}f(t)\dd t$ is also log-concave (apply to $f(t) = e^{-t}\1_{(\lambda,\infty)}(t)$). This result goes back to Borell's work \cite{Bor} (for this exact formulation see, e.g. Corollary 5.13 in \cite{Oliv} or Theorem 5 in \cite{NO} containing a direct proof). 
\end{proof}

\begin{remark}
The above theorem and proof uses two related notions of log-concavity. They are reconciled by the fact that if $f: (0,\infty)\to [0,\infty)$ is log-concave then the sequence $f(i),i=0,1,\ldots$ is also log-concave.
\end{remark}

\begin{lemma}\label{lm:fk-monot}
For every $k \geq 1$, the function $f_k$ is strictly increasing on $(0,\infty)$ and onto $(0,k)$. In particular, the functional inverse, $f_k^{-1}: (0,k) \to (0,\infty)$ is well-defined, also strictly increasing.
\end{lemma}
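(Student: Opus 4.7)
The plan is to give $f_k$ a probabilistic interpretation and then deduce its monotonicity from the exponential family structure. Let $X_\lambda$ denote the truncated Poisson law from \eqref{eq:def-trun-poiss} with support $\{0,1,\ldots,k\}$. A direct computation yields
\[
\E X_\lambda = \sum_{j=1}^k j \cdot \frac{\lambda^j/j!}{s_k(\lambda)} = \frac{\lambda s_{k-1}(\lambda)}{s_k(\lambda)} = f_k(\lambda),
\]
so proving the lemma amounts to showing that the mean of $X_\lambda$ is strictly increasing in $\lambda$ and covers $(0,k)$.

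For monotonicity I would reparameterize by the natural parameter $\theta=\log\lambda$, so that $\p{X_\lambda=j}\propto e^{j\theta}/j!$ is an exponential family with log-partition function $A(\theta)=\log s_k(e^\theta)$. The standard identities $A'(\theta)=\E X_\lambda$ and $A''(\theta)=\Var(X_\lambda)$, together with the chain rule, give $\lambda f_k'(\lambda)=\Var(X_\lambda)$. For $k\geq 1$ and $\lambda>0$, the variable $X_\lambda$ places positive mass on each of $0,1,\ldots,k$, so $\Var(X_\lambda)>0$ and hence $f_k'(\lambda)>0$ throughout $(0,\infty)$. (Alternatively, one can avoid the exponential family language and argue by monotone likelihood ratio: for $\lambda_1<\lambda_2$, the ratio $\p{X_{\lambda_2}=j}/\p{X_{\lambda_1}=j}$ is a strictly increasing function of $j$, so $X_{\lambda_2}$ strictly stochastically dominates $X_{\lambda_1}$ and the means are strictly ordered.)

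For the range, I would read off the endpoint behavior directly from the definition: as $\lambda\to 0^+$, $s_k(\lambda)\to 1$ while $\lambda s_{k-1}(\lambda)\to 0$, so $f_k(\lambda)\to 0$; as $\lambda\to\infty$, the leading terms dominate, $s_k(\lambda)\sim\lambda^k/k!$ and $\lambda s_{k-1}(\lambda)\sim\lambda^k/(k-1)!$, so $f_k(\lambda)\to k$. Combined with strict monotonicity and continuity, this makes $f_k$ a homeomorphism $(0,\infty)\to(0,k)$, and the inverse $f_k^{-1}$ inherits strict monotonicity and continuity automatically.

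No step looks like a real obstacle: the variance identity is what makes the sign of $f_k'$ transparent, which would otherwise require a somewhat fiddly manipulation of the expression $(s_{k-1}+\lambda s_{k-2})s_k-\lambda s_{k-1}^2$. It is worth noting that the log-concavity of $(s_d(\lambda))$ supplied by Lemma \ref{lm:log-conc-sd} is not needed for this lemma (it will presumably be used elsewhere).
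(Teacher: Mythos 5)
Your proof is correct, but it takes a genuinely different and considerably shorter route than the paper. The paper rewrites $f_k$ in terms of the upper incomplete gamma function $\Gamma(k,x)$, differentiates, reduces the positivity of $f_k'$ to a quadratic inequality in $\Gamma(k,x)$, and then verifies the resulting inequality by a further calculus argument (checking the boundary values of an auxiliary function $h$ and the sign of $h'$) --- a self-contained but fairly laborious computation. You instead observe that $f_k(\lambda)=\E X_\lambda$ for the truncated Poisson variable $X_\lambda$, and that in the natural parameter $\theta=\log\lambda$ this is an exponential family with log-partition function $A(\theta)=\log s_k(e^\theta)$, so that $\lambda f_k'(\lambda)=A''(\theta)=\Var(X_\lambda)>0$ because $X_\lambda$ is non-degenerate for $k\geq 1$, $\lambda>0$; this is a valid and standard identity (as is the monotone likelihood ratio alternative you sketch), and your treatment of the limits at $0$ and $\infty$ matches the paper's. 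What your approach buys is transparency: the sign of $f_k'$ becomes a variance, and the whole proof is a few lines. What the paper's approach buys is that it stays entirely within elementary calculus on the explicit functions, at the cost of the quadratic-inequality manipulation. Your closing remark is also accurate: the paper's proof of this lemma does not use Lemma \ref{lm:log-conc-sd} either (that lemma is used later to bound $\Var(Z_1)$ by $\E Z_1$ in the local-limit step).
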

\begin{proof}
Fix $k \geq 1$ and consider $f_{k}$: rewriting \eqref{eq:s_d-integral} in terms of the upper incomplete gamma function $\Gamma(s,x) = \int_x^\infty t^{s-1}e^{-t} \dd t$, we have
\[
f_{k}(x) = k\frac{x\Gamma(k,x)}{\Gamma(k+1,x)}.
\]
Differentiating,
\[
\frac{\Gamma(k+1,x)^2}{k}\frac{\dd}{\dd x}f_{k+1}(x) = (\Gamma(k,x)-x^ke^{-x})\Gamma(k+1,x)+x^{k+1}e^{-x}\Gamma(k,x).
\]
Using $\Gamma(k+1,x) = k\Gamma(k,x)+x^ke^{-x}$ we can express the condition $\frac{\dd}{\dd x}f_{k+1}(x) > 0$ as a quadratic inequality for $\Gamma(k,x)$:
\[
k\Gamma(k,x)^2+x^{k}e^{-x}(x-k+1)\Gamma(k,x)-x^{2k}e^{-2x}>0,
\]
or
\[
\brac{\Gamma(k,x)+\frac{x^{k}e^{-x}(x-k+1)}{2k}}^2> \frac{x^{2k}e^{-2x}}{k}+\bfrac{x^{k}e^{-x}(x-k+1)}{2k}^2
\]
or
\begin{equation}\label{y}
\Gamma(k,x)>\frac{x^ke^{-x}}{2k}(\sqrt{(x-k+1)^2+4k}-(x-k+1)).
\end{equation}
Let $h(x)$ be the left hand side minus the right hand side of \eqref{y}. Clearly, $h(0) = (k-1)! > 0$. Moreover, using a standard asymptotic expansion 
\[
\Gamma(k,x) \approx x^{k-1}e^{-x}\left(1 + \frac{k-1}{x} + \frac{(k-1)(k-2)}{x^2}+ \ldots\right), \ \text{as } x \to \infty,
\]
we can check that $h(x) \approx x^{k-1}e^{-x}(\frac{1}{x^2} + \ldots)$, so $h(x) \to 0$ as $x \to \infty$. Thus to see that $h(x) > 0$ for $x > 0$, it suffices to check that $h'(x) < 0$ for $x > 0$. We have,
\begin{align*}
h'(x) &= -x^{k-1}e^{-x} - \frac{x^{k-1}e^{-x}}{2k}(k-x)\left(\frac{x-k+1}{\sqrt{(x-k+1)^2+4k}}-1\right) \\
&= -\frac{x^{k-1}e^{-x}}{2k\sqrt{(x-k+1)^2+4k}}\Big(2k\sqrt{(x-k+1)^2+4k} + (k-x)\big((x-k+1) \\
&\qquad\qquad\qquad\qquad\qquad\qquad\qquad\qquad\qquad\qquad\qquad\qquad - \sqrt{(x-k+1)^2+4k}\big) \Big) \\
&=-\frac{x^{k-1}e^{-x}}{2k\sqrt{(x-k+1)^2+4k}}\left((k+x)\sqrt{(x-k+1)^2+4k} + (k-x)(x-k+1)\right) ,
\end{align*}
so $h'(x) < 0$ is equivalent to
\[
(k+x)\sqrt{(x-k+1)^2+4k} > (x-k)(x-k+1).
\]
When $k-1 < x < k$, the right hand side is negative, so the inequality is clearly true. Otherwise, squaring it, we equivalently get
\[
(k+x)^2((x-k+1)^2+4k) > (x-k)^2(x-k+1)^2
\]
which is clearly true because $(k+x)^2 > (x-k)^2$ for $x > 0$.

It is clear from \eqref{eq:def-sum} and \eqref{eq:def-fk} that $f_k$ is a ratio of two polynomials, each of degree $k$ and $f_k(x) = \frac{\frac{x^{k}}{(k-1)!}+ \ldots}{\frac{x^{k}}{k!}+\ldots}$, so $f_k(x) \to k$ as $x \to \infty$. This combined with the monotonicity and $f_k(0) = 0$ justifies that $f_k$ is a bijection onto $(0,k)$.
\end{proof}

\subsection{Main elements of the proof}
Let $\mathcal{D}$ be the set of all sequences of nonnegative integers $x_1,\ldots,x_n \leq d$ such that $\sum x_i = 2m$ (possible degrees). For $x \in \mathcal{D}$, let $\mathcal{G}_{n,x}$ be the set of all simple graphs on vertex set $[n]$ such that vertex $i$ has degree $x_i,\,i=1,2,\ldots,n$. We study graphs in $\cG_{n,x}$ via the Configuration Model of Bollob\'as \cite{Conf}. We do this as follows: let $Z_x$ be the multi-set consisting of $x_i$ copies of $i$, for $i=1,2,\ldots,n$ and let $z=z_1,z_2,\ldots,z_{2m}$ be a random permutation of $Z_x$. We then define $\Gamma_z$ to be the (configuration) multigraph with vertex set $[n]$ and edges $\set{z_{2i-1},z_{2i}}$ for $i=1,2,\ldots,m$. It is a classical fact that conditional on being simple, $\Gamma_z$ is distributed as a uniform random member of $\cG_{n,x}$, see for example Section 11.1 of \cite{book}.

Let $\alpha_x = \frac{\sum_i x_i(x_i-1)}{2m}$. Note that $0 \leq \alpha_x \leq d$. It is known that
\[
|\mathcal{G}_{n,x}| \approx e^{-\alpha_x(\alpha_x+1)}\frac{(2m)!}{\prod_i x_i!}
\]
as $n \to \infty$ with the $o(1)$ term being uniform in $x$ (in fact, depending only on $\Delta = \max_i x_i$). Here the term $ e^{-\alpha_x(\alpha_x+1)}$ is the asymptotic probability that $\Gamma_z$ is simple. Therefore, for any $x \in \mathcal{D}$, we have
\[
\p{G_{n,m,d} \in \mathcal{G}_{n,x}} = \frac{|\mathcal{G}_{n,x}|}{\sum_{y \in \mathcal{D}} |\mathcal{G}_{n,y}|} \lesssim e^{d(d+1)}\frac{\frac{(2m)!}{\prod_i x_i!}}{\sum_{y \in \mathcal{D}} \frac{(2m)!}{\prod_i y_i!}},
\]
which by Lemma \ref{lm:trun-poiss} gives
\[
\p{G_{n,m,d} \in \mathcal{G}_{n,x}} \lesssim e^{d(d+1)}\p{Z = x \ \Big| \ \sum_i Z_i = 2m},
\]
where $Z_1,\ldots,Z_n$ are i.i.d. truncated Poisson random variables defined in \eqref{eq:def-trun-poiss}.

For any graph property $\mathcal{P}$, we thus have
\begin{align}
\notag\p{G_{n,m,d} \in \mathcal{P}} &= \sum_{x \in \mathcal{D}} \p{G_{n,m,d} \in \mathcal{P} \ | \ G_{n,m,d} \in \mathcal{G}_{n,x}}\p{G_{n,m,d} \in \mathcal{G}_{n,x}} \\
\notag&= \sum_{x \in \mathcal{D}} \p{G_{n,x} \in \mathcal{P}}\p{G_{n,m,d} \in \mathcal{G}_{n,x}} \\
&\lesssim e^{d(d+1)}\sum_{x \in \mathcal{D}} \p{G_{n,x} \in \mathcal{P}}\p{Z = x \ \Big| \ \sum_i Z_i = 2m},\label{eq:P-sum-over-degrees}
\end{align}
where $G_{n,x}$ denotes a random graph selected  uniformly at random from $\mathcal{G}_{n,x}$.

To handle the conditioning, we have chosen $\lambda$ so that $\mu = \E Z_1$, that is the value of $\lambda$ given by \eqref{eq:deflam}.

From Lemma \ref{lm:local-CLT} we get that for arbitrary $\delta > 0$, for sufficiently large $n$,
\[
\p{Z_1+\ldots+Z_n = 2m} \geq -\frac{\delta}{\sqrt{n}} + \frac{1}{\sqrt{2\pi n \sigma^2}}\exp\left\{-\frac{(2m-\mu n)^2}{2n\sigma^2}\right\}.
\]
Since $2m - \mu n = 2\lceil \frac{\mu n}{2}\rceil - \mu n \leq 2$ and $\sigma^2 = \Var(Z_1)$ depends only on $\lambda$ and $d$, hence only on $\mu$ and $d$, for sufficiently large $n$, the exponential factor is greater than, say $1/2$. Adjusting $\delta$ appropriately and using that $\sigma^2 \leq \mu$, in fact, \[
\Var(Z_1) = \E Z_1(Z_1-1) - (\E Z_1)^2 + \E Z_1 = \lambda^2\frac{s_{d-2}(\lambda)s_d(\lambda) - s_{d-1}(\lambda)^2}{s_d(\lambda)} + \E Z_1,
\] 
which by Lemma \ref{lm:log-conc-sd} is bounded by $\E Z_1 = \mu$, we get for sufficiently large $n$,
\begin{equation}\label{eq:local-lowbd}
\begin{split}
\p{Z_1+\ldots+Z_n = 2m} &\geq \frac{1}{10\sqrt{\mu n}}.
\end{split}
\end{equation}
Thus, for every $x \in \mathcal{D}$,
\begin{equation}\label{eq:cond-removed}
\p{Z = x \ \Big| \ \sum_i Z_i = 2m} \leq \frac{\p{Z = x}}{\p{ \sum_i Z_i = 2m}} \leq 10\sqrt{\mu n} \p{ Z = x}.
\end{equation}

The next step is to break the sum in \eqref{eq:P-sum-over-degrees} into likely and unlikely degree sequences. Note that $\E\sum_{j=1}^d \1_{\{Z_j = i\}} = n\p{Z_1 = i} = n \lambda_i$. By Hoeffding's inequality,
\[
\p{\left|\sum_{j=1}^n \1_{\{Z_j = i\}} - n \lambda_i\right| > \varepsilon n \lambda_i} \leq 2e^{-\varepsilon^2n\lambda_i/3}, \qquad \varepsilon > 0.
\]
Put $\varepsilon = n^{-1/3}\frac{1}{\max_i \lambda_i}$. The union bound yields
\beq{degrees}{
\p{\exists i \leq d \ \left|\sum_{j=1}^n \1_{\{Z_j = i\}} - n \lambda_i\right| > n^{2/3}} \leq 2d\exp\left\{- n^{1/3}\frac{\min_i \lambda_i}{3(\max_i \lambda_i)^2}\right\}.
}
This proves (a). It also shows that w.h.p. $n\lambda_i,i=0,1,\ldots,d$ asymptotically defines the degree distribution of $G_{n,m,d}$. Also, given that $x$ is chosen uniformly at random from $\cD$, we see that the distribution of $G_{n,x}$ in this case is the same as the distribution of the configuration model for the given degree sequence. 

To prove (b) and (c), we will use the Molloy-Reed criterion (see \cite{mol1},\cite{mol2} and Theorem 11.11 in \cite {book} for the exact formulation we shall use). First define
\[
\mathcal{A} = \left\{x=(x_1,\ldots,x_n) \in \mathcal{D}, \ \exists i \leq d \ \left|\sum_{j=1}^n \1_{\{x_j = i\}} - n \lambda_i\right| > n^{2/3}\right\}.
\]
Then, using \eqref{eq:cond-removed} and \eqref{degrees},
\begin{align*}
\sum_{x \in \mathcal{A}} \p{G_{n,x} \in \mathcal{P}}\p{Z = x \ \Big| \ \sum_i Z_i = 2m} &\leq 10\sqrt{\mu n}\sum_{x \in \mathcal{A}} \p{Z = x } \\
&= 10\sqrt{\mu n}\p{Z \in \mathcal{A}} \\ 
&\leq 20d\sqrt{\mu n}\exp\left\{-n^{1/3}\frac{\min_i \lambda_i}{3(\max_i \lambda_i)^2}\right\}.
\end{align*}

It remains to handle the typical terms $x \in \mathcal{D} \setminus \mathcal{A}$ in \eqref{eq:P-sum-over-degrees}. For such $x$, we now estimate $p_x = \p{G_{n,x} \in \mathcal{P}}$ in two cases: for $\mathcal{P}$ being the complement of (i) ``there are only small components'', and (ii) ``there is a giant'' depending on the behaviour of the degree sequences. 

Let $Q = \sum_{i=0}^d i(i-2)\lambda_i$. Note that by the definition of $\mathcal{A}$, for every $x \in \mathcal{D} \setminus \mathcal{A}$, the number of vertices in $G_{n,x}$ is $n\lambda_i + O(n^{2/3})$, so it is justified to use the Molloy-Reed criterion and we obtain that: if $Q < 0$, then $\max_x p_x \to 0$ in the case (i), and the same if $Q > 0$ in the case (ii). Finally note that
\[
Q = \lambda^2\frac{s_{d-2}(\lambda)}{s_{d}(\lambda)} - \lambda\frac{s_{d-1}(\lambda)}{s_{d}(\lambda)} = f_{d}(\lambda)(f_{d-1}(\lambda) - 1)
\]
and Lemma \ref{lm:fk-monot} together with the definition of $\lambda$, that is \eqref{eq:deflam}, finishes the proof. The expression for $\Theta$ is in \cite{mol2}. (One can also find a simplified proof of the Molloy-Reed results in \cite{book}, Theorem 11.11.)

\section{Conclusions}
We have found tight expressions for the degree sequence of $G_{n,m,d}$ and we have used the Molloy-Reed results to exploit them. In future work, we plan to study the scaling window around $Q$ close to zero. Hatami and Molloy \cite{HM} consider this case and their results show that we can expect a maximum component size close to $n^{2/3}$ in this case. They deal with a general degree sequence and perhaps we can prove tighter results for our specific case.

\end{document}